\theoremstyle{plain}
   \newtheorem{theorem}{Theorem}[section]
   \newtheorem{proposition}[theorem]{Proposition}
   \newtheorem{lemma}[theorem]{Lemma}
   \newtheorem{conjecture}[theorem]{Conjecture}
\theoremstyle{definition}
   \newtheorem{definition}[theorem]{Definition}
   \newtheorem{example}[theorem]{Example}
   \newtheorem{remark}[theorem]{Remark}
\numberwithin{equation}{section}
\newcommand{\CC}{{\mathbb {C}}}
\newcommand{\ZZ}{{\mathbb {Z}}}
\newcommand{\ch}{{\operatorname{ch}}}
\newcommand{\SSYT}{{\rm SSYT}}
\DeclareMathOperator{\Gr}{Gr}
\newcommand\scalemath[2]{\scalebox{#1}{\mbox{\ensuremath{\displaystyle #2}}}}
\newlength{\mysizetiny}
\newlength{\mysizesmall}
\newlength{\mysize}
\newlength{\mysizelarge}
\newenvironment{nouppercase}{%
	\renewcommand{\uppercasenonmath}[1]{}}{}
\begin{document}

\title[Classification of prime modules corresponding to 2-column tableaux]{Classification of prime modules of quantum affine algebras corresponding to 2-column tableaux} 

\author{Nick Early and Jian-Rong Li}
\address{Nick Early, Institute for Advanced Study, Princeton, NJ.}
\email{\href{mailto:earlnick@ias.edu}{earlnick@ias.edu}}   

\address{Jian-Rong Li, Faculty of Mathematics, University of Vienna, Oskar-Morgenstern-Platz 1, 1090 Vienna, Austria.} 
\email{jianrong.li@univie.ac.at}

\date{}
\begin{nouppercase}
\maketitle
\end{nouppercase}

{\centering\footnotesize \textit{Dedicated to Professor Vyjayanthi Chari on the occasion of her 65th birthday}\par}

\begin{abstract} 
Finite dimensional simple modules of quantum affine algebras of type A correspond to semistandard Young tableaux of rectangular shapes. In this paper, we classify all prime modules corresponding to 2-column semistandard Young tableaux, up to a conjectural property.
Moreover, we give a conjectural sufficient condition for a module corresponding to a tableau with more than two columns to be prime. 
\end{abstract}

\section{Introduction}

A quantum affine algebra $U_q(\widehat{\mathfrak{g}})$ is a Hopf algebra that is a $q$-deformation of the universal enveloping algebra of an affine Lie algebra $\widehat{\mathfrak{g}}$, see \cite{CP94}. Chari and Pressley classified finite dimensional simple $U_q(\widehat{\mathfrak{g}})$-modules \cite{CP91, CP94}: every finite dimensional simple $U_q(\widehat{\mathfrak{g}})$-module corresponds to an $I$-tuple of polynomials $(p_i(u))_{i \in I}$ called Drinfeld polynomials, where $I$ is the set of vertices of the Dynkin diagram of $\mathfrak{g}$, $p_i(u) \in \CC[u]$ and each $p_i(u)$ has constant term $1$. Each $I$-tuple of Drinfeld polynomials $(p_i(u))_{i \in I}$ corresponds to a dominant monomial in formal variables $Y_{i,a}$, $i \in I$, $a \in \mathbb{C}^{\times}$, where dominant means the exponents appearing in the monomial are all non-negative. The finite dimensional simple module corresponding to a dominant monomial $M$ is denoted by $L(M)$. 

A simple module $L(M)$ is called prime if $L(M)$ cannot be decomposed as the tensor product of two non-trivial simple modules. That is, $L(M) \not\cong L(M') \otimes L(M'')$ for any non-trivial modules $L(M')$, $L(M'')$. Prime modules of $U_q(\widehat{\mathfrak{sl}_2})$ have been classified by Chari and Pressley \cite{CP97}. They proved that all prime modules of $U_q(\widehat{\mathfrak{sl}_2})$ are Kirillov-Reshetikhin modules. Kirillov-Reshetikhin modules are simple $U_q(\widehat{\mathfrak{g}})$-modules which correspond to dominant monomials of the form $Y_{i,s}Y_{i,s+2} \cdots Y_{i,s+2r}$ (when $\mathfrak{g}$ is of simply-laced type), where $i \in I$, $r \in \mathbb{Z}_{\ge 0}$. In general, it is a difficult problem to classify prime modules of $U_q(\widehat{\mathfrak{g}})$. Prime modules have been studied intensively in the literature, see for example, \cite{BCM18, BC22, CMY13, CP96a, CP97, EL23, FM00, HL10, MouS22}.  

We denote by $\Gr(k,n)$ the Grassmannian of $k$-planes in $\mathbb{C}^n$ and $\CC[\Gr(k,n)]$ its homogeneous coordinate ring. It was shown by Scott \cite{Sco06} that the ring $\CC[\Gr(k,n)]$ has a cluster algebra structure. The algebra  $\CC[\Gr(k,n)]$ is called a Grassmannian cluster algebra. 

Hernandez and Leclerc \cite[Section 13]{HL10} proved that the Grothendieck ring $K_0(\mathcal{C}_{\ell})$ of certain subcategory $\mathcal{C}_{\ell}$ (see Section \ref{subsec:HL category and cluster algebra}) of the category of finite dimensional $U_q(\widehat{\mathfrak{sl}_k})$-modules is isomorphic to a certain quotient $\CC[\Gr(k,n,\sim)]$ (certain frozen variables are sent to $1$, see Section \ref{subsec:Grassmannian cluster algebras and semistandard Young tableaux}) of the Grassmannian cluster algebra $\CC[\Gr(k,n)]$, where $n = k+\ell+1$. Denote by $\SSYT(k,[n])$ the set of semistandard Young tableaux of rectangular shapes with $k$ rows and with entries in $[n]=\{1,\ldots, n\}$. 
It was shown in \cite{CDFL} that there is a one to one correspondence between the elements in the dual canonical basis of $K_0(\mathcal{C}_{\ell})$ (resp. $\CC[\Gr(k,n,\sim)]$) and semistandard Young tableaux in $\SSYT(k,[n],\sim)$ (we also call an equivalence class in $\SSYT(k,[n],\sim)$ a tableau), where $\SSYT(k,[n],\sim)$ is a certain quotient of $\SSYT(k,[n])$, see Section \ref{subsec:Grassmannian cluster algebras and semistandard Young tableaux}. We say that a tableau is prime if the corresponding module is prime. Therefore, classification of prime modules in $\mathcal{C}_{\ell}$ is equivalent to classification of prime tableaux in $\SSYT(k,[n],\sim)$.

Recently, cluster variables of $\CC[\Gr(k,n)]$ corresponding to 2-column tableaux have been studied in \cite[Section 4]{BBGL} and \cite{LeY23}. The set of cluster variables corresponding to 2-column tableaux is a subset of the set of 
prime elements in the dual canonical basis of $\CC[\Gr(k,n)]$ corresponding to 2-column tableaux.  

In this paper, we classify all prime modules corresponding to 2-column semistandard Young tableaux, up to a conjectural property in Conjecture \ref{conjecture: tensor product is simple implies that I J weakly separated}. We prove the following property in Lemma \ref{lem:semistandard tableau and noncrossing tuple}: for every tableau $T \in {\rm SSYT}(k, [n])$ which has $m$ columns, there is a unique unordered $m$-tuple $(S_1, \ldots, S_m)$ of one-column tableaux which are pairwise noncrossing such that $T = S_1 \cup \cdots \cup S_m$. Let $L(M)$ be a simple $U_q(\widehat{\mathfrak{sl}_k})$-module such that the corresponding tableau $T_M$ is of 2-column. By Lemma \ref{lem:semistandard tableau and noncrossing tuple}, there is a unique pair $T_1, T_2$ of one-column tableaux $T_1, T_2$ such that $T_1, T_2$ are noncrossing and $T_M = T_1 \cup T_2$. Under the assumption that Conjecture \ref{conjecture: tensor product is simple implies that I J weakly separated} (For two $k$-element subsets $J, J'$ of $[n]$, if $L(M_{J}) \otimes L(M_{J'})$ is simple, then $J, J'$ are weakly separated) is true, we prove that $L(M)$ is prime if and only if $T_1, T_2$ are not weakly separated, see Theorem \ref{thm: Uqslkhat modules corresponding to 2 column tableaux}. We also count the number of prime modules corresponding to $2$-column tableaux: for $k \le n/2$, the number of $2$-column prime tableaux is $a_{k,n,2}-b_{k,n}$, where $a_{k,n,m}=\prod_{i=1}^k \prod_{j=1}^m \frac{n-i+j}{k+m-i-j+1}$, $b_{k,n} = \binom{n}{k} + \sum_{j=1}^{k} j \genfrac{(}{)}{0pt}{}{n}{k-j, 2j, n-k-j}$, and $\genfrac{(}{)}{0pt}{}{n}{a,b,c}=\frac{n!}{a!b!c!}$, see Proposition \ref{prop:number of 2-column prime tableaux}.

Moreover, we give a conjectural sufficient condition for a simple module corresponding to a tableau with more than 2 columns to be prime. Let $T \in {\rm SSYT}(k, [n])$ and let $(S_1, \ldots, S_m)$ be the unique unordered $m$-tuple $(S_1, \ldots, S_m)$ of one-column tableaux which are pairwise noncrossing such that $T = S_1 \cup \cdots \cup S_m$. We conjecture that if for every $i \ne j$, $S_i, S_j$ are not weakly separated, then $T$ is prime, see Conjecture \ref{conj:pairwise noncrossing and not weakly separated collections induce prime modules}. 

The paper is organized as follows. In Section \ref{sec:preliminaries}, we recall results of quantum affine algebras, Hernandez--Leclerc's category $\mathcal{C}_{\ell}$, and Grassmannian cluster algebras. In Section \ref{sec:prime modules corresponding to 2-column tableaux}, we classify all prime modules corresponding to 2-column semistandard Young tableaux. In Section \ref{sec:2-column prime tableaux in Gr48 and Gr510}, we list prime modules corresponding to 2-column prime tableaux for $\CC[\Gr(4,8)]$ and $\CC[\Gr(5,10)]$. In Section \ref{sec:Prime modules corresponding to tableaux with two or more columns}, we give a conjectural sufficient condition for a module corresponding to a tableau with more than two columns to be prime. 

\subsection*{Acknowledgements}
The authors would like to thank Fedor Petrov for his help of proving Proposition \ref{prop:number of 2-column prime tableaux}. The authors would like to thank the reviewers for their insightful comments and valuable suggestions, which have improved the manuscript. JRL is supported by the Austrian Science Fund (FWF): P-34602, Grant DOI: 10.55776/P34602, and PAT 9039323, Grant-DOI 10.55776/PAT9039323. This research received funding from the European Research Council (ERC) under the European Union’s Horizon 2020 research and innovation programme (grant agreement No 725110), Novel structures in scattering amplitudes. N.E. was funded by the European Union (ERC, UNIVERSE PLUS, 101118787). \begin{tiny}
Views and opinions expressed are however those of the author(s) only and do not necessarily reflect those of the European Union or the European Research Council Executive Agency. Neither the European Union nor the granting authority can be held responsible for them.
\end{tiny}  

\section{Preliminaries} \label{sec:preliminaries}
In this section, we recall results of quantum affine algebras \cite{CP94, FR98}, Hernandez-Leclerc's category $\mathcal{C}_{\ell}$ \cite{HL10}, and Grassmannian cluster algebras \cite{Sco06, CDFL}. 

\subsection{Quantum affine algebras} \label{subsec:quantum affine algebras}

Let $\mathfrak{g}$ be a simple finite-dimensional Lie algebra and $I$ the set of vertices of the Dynkin diagram of $\mathfrak{g}$. The quantum affine algebra $U_q(\widehat{\mathfrak{g}})$ is a Hopf algebra that is a $q$-deformation of the universal enveloping algebra of $\widehat{\mathfrak{g}}$ \cite{Drin87, Jim85}. In this paper, we take $\mathfrak{g}$ to be of type $A$, i.e., $\mathfrak{g}=\mathfrak{sl}_k$ for $k \in \ZZ_{\ge 1}$, and take $q$ to be a non-zero complex number which is not a root of unity. 

Denote by $\mathcal{P}$ the free abelian group generated by formal variables $Y_{i, a}^{\pm 1}$, $i \in I$, $a \in \CC^*$, and denote by $\mathcal{P}^+$ the submonoid of $\mathcal{P}$ generated by $Y_{i, a}$, $i \in I$, $a \in \CC^*$. Let $\mathcal{C}$ denote the monoidal category of finite-dimensional representations of the quantum affine algebra $U_q(\widehat{\mathfrak{g}})$. Any finite dimensional simple object in $\mathcal{C}$ is a highest $l$-weight module with a highest $l$-weight $M \in \mathcal{P}^+$, denoted by $L(M)$ (see \cite{CP95a}). The elements in $\mathcal{P}^+$ are called dominant monomials.  

Frenkel and Reshetikhin \cite{FR98} introduced the $q$-character map which is an injective ring morphism $\chi_q$ from the Grothendieck ring of $\mathcal{C}$ to $\mathbb{Z}\mathcal{P} = \mathbb{Z}[Y_{i, a}^{\pm 1}]_{i\in I, a\in \mathbb{C}^{\times}}$. For a $U_q(\widehat{\mathfrak{g}})$-module $V$, $\chi_q(V)$ encodes the decomposition of $V$ into common generalized eigenspaces for the action of a large commutative subalgebra of $U_q(\widehat{\mathfrak{g}})$ (the loop-Cartan subalgebra). These generalized eigenspaces are called $l$-weight spaces and generalized eigenvalues are called $l$-weights. One can identify $l$-weights with monomials in $\mathcal{P}$ \cite{FR98}. Then
the $q$-character of a $U_q(\widehat{\mathfrak{g}})$-module $V$ is given by (see \cite{FR98})
\begin{align*}
\chi_q(V) = \sum_{  m \in \mathcal{P}} \dim(V_{m}) m \in \mathbb{Z}\mathcal{P},
\end{align*}
where $V_{m}$ is the $l$-weight space with $l$-weight $m$. 

For $i \in I$, $a \in \mathbb{C}^{\times}$, $k \in \ZZ_{\ge 1}$, the modules
\begin{align*}
X_{i,a}^{(k)} := L(Y_{i,a} Y_{i,aq^2} \cdots Y_{i,aq^{2k-2}})
\end{align*}
are called Kirillov-Reshetikhin modules. The modules $X_{i,a}^{(1)} = L(Y_{i,a})$ are called fundamental modules.

\subsection{\texorpdfstring{Hernandez-Leclerc's category $\mathcal{C}_{\ell}$}{Hernandez-Leclerc's category C\_ell}} \label{subsec:HL category and cluster algebra}

We recall the definition of Hernandez-Leclerc's category $\mathcal{C}_{\ell}$ \cite{HL10} in the case when $\mathfrak{g}=\mathfrak{sl}_k$. 

For integers $a \le b$, we denote $[a,b] = \{i: a \le i \le b\}$ and $[a] = \{i: 1 \le i \le a\}$. Let $\mathfrak{g}=\mathfrak{sl}_k$ over $\CC$ and let $\mathcal{C}$ be the category of finite-dimensional $U_q(\widehat{\mathfrak{g}})$-modules. In \cite{HL10}, Hernandez and Leclerc introduced a full subcategory $\mathcal{C}_{\ell} = \mathcal{C}_{\ell}^{\mathfrak{g}}$ of $\mathcal{C}$ for every $\ell \in \mathbb{Z}_{\geq 0}$. Let $I=[k-1]$ be the set of vertices of the Dynkin diagram of $\mathfrak{g}$. We fix $a \in \CC^*$ and denote $Y_{i,s} = Y_{i,aq^s}$, $i \in I$, $s \in \ZZ$. For $\ell \in \ZZ_{\ge 0}$, denote by $\mathcal{P}_\ell$ the subgroup of $\mathcal{P}$ generated by $Y_{i,\xi(i)-2r}^{\pm 1}$, $i \in I$, $r \in [0, \ell]$, where $\xi: I \to \ZZ$ is a height function defined by $\xi(i)=i-2$. Denote by $\mathcal{P}^+_\ell$ the submonoid of $\mathcal{P}^+$ generated by $Y_{i,\xi(i)-2r}$, $i \in I$, $r \in [0, \ell]$. An object $V$ in $\mathcal{C}_{\ell}$ is a finite-dimensional $U_q(\widehat{\mathfrak{g}})$-module which satisfies the condition: for every composition factor $S$ of $V$, the highest $l$-weight of $S$ is a monomial in $\mathcal{P}^+_\ell$, \cite{HL10}. Simple modules in $\mathcal{C}_{\ell}$ are of the form $L(M)$ (see \cite{CP94}, \cite{HL10}), where $M \in \mathcal{P}_{\ell}^+$.

For every $\ell \ge 0$, Hernandez and Leclerc constructed a cluster algebra for $\mathcal{C}_{\ell}$ of $U_q(\widehat{\mathfrak{g}})$ \cite{HL10}. The cluster algebra for $\mathcal{C}_{\ell}$ of $U_q(\widehat{\mathfrak{sl}_k})$ is isomorphic to the cluster algebra for a certain quotient $\CC[\Gr(k,n,\sim)]$ (see Section \ref{subsec:Grassmannian cluster algebras and semistandard Young tableaux}) of the Grassmannian cluster algebra $\CC[\Gr(k,n)]$ \cite{CDFL, HL10, Sco06}, $n=k+\ell+1$. 
	
\subsection{Grassmannian cluster algebras and semistandard Young tableaux}\label{subsec:Grassmannian cluster algebras and semistandard Young tableaux}

For $k \le n$, the Grassmannian $\Gr(k,n)$ is the set of $k$-dimensional subspaces in an $n$-dimensional vector space. In this paper, we denote by $\Gr(k,n)$ (the affine cone over) the Grassmannian of $k$-dimensional subspaces in $\CC^n$, and denote by $\CC[\Gr(k,n)]$ its coordinate ring. This algebra is generated by Pl\"{u}cker coordinates 
\begin{align*}
P_{i_1, \ldots, i_{k}}, \quad 1 \leq i_1 < \cdots < i_{k} \leq n.
\end{align*}

It was shown by Scott \cite{Sco06} that the ring $\CC[\Gr(k,n)]$ has a cluster algebra structure. Define $\CC[\Gr(k,n,\sim)]$ to be the quotient of $\CC[\Gr(k,n)]$ by the ideal generated by $P_{i, \ldots, i+k-1}-1$, $i \in [n-k+1]$. In \cite{CDFL}, it is shown that the elements in the dual canonical basis of $\CC[\Gr(k,n,\sim)]$ are in bijection with semistandard Young tableaux in ${\rm SSYT}(k, [n],\sim)$, where ${\rm SSYT}(k, [n],\sim)$ is defined in the following paragraphs. 

A semistandard Young tableau is a Young tableau with weakly increasing rows and strictly increasing columns. For $k,n \in \ZZ_{\ge 1}$, we denote by ${\rm SSYT}(k, [n])$ the set of rectangular semistandard Young tableaux with $k$ rows and with entries in $[n]$ (with arbitrarly many columns). The empty tableau is denoted by $\mathds{1}$. 

For $S,T \in {\rm SSYT}(k, [n])$, let $S \cup T$ be the row-increasing tableau whose $i$th row is the union of the $i$th rows of $S$ and $T$ (as multisets), \cite{CDFL}. It is shown in Section 3 in \cite{CDFL} that $S \cup T$ is semistandard for any pair of semistandard tableaux $S, T$.  

We call $S$ a factor of $T$, and write $S \subset T$, if the $i$th row of $S$ is contained in that of $T$ (as multisets), for $i \in [k]$. In this case, we define $\frac{T}{S}=S^{-1}T=TS^{-1}$ to be the row-increasing tableau whose $i$th row is obtained by removing that of $S$ from that of $T$ (as multisets), for $i \in [k]$. 

A tableau $T \in {\rm SSYT}(k, [n])$ is trivial if each entry of $T$ is one less than the entry below it. For any $T \in {\rm SSYT}(k, [n])$, we  denote by $T_{\text{red}} \subset T$ the semistandard tableau obtained by removing a maximal trivial factor from $T$. For a trivial $T$, one has $T_{\text{red}} = \mathds{1}$. 

Let ``$\sim$'' be the equivalence relation on $S, T \in {\rm SSYT}(k, [n])$ defined by: $S \sim T$ if and only if $S_{\text{red}} = T_{\text{red}}$. We denote by ${\rm SSYT}(k, [n],\sim)$ the set of $\sim$-equivalence classes. We also call $\sim$-equivalence classes in ${\rm SSYT}(k, [n],\sim)$ tableaux. For a tableau $T$ ($\sim$-equivalence class) in ${\rm SSYT}(k, [n],\sim)$, we choose the representative of the class as the unique tableau which has smallest number of columns and we say that the number of columns of $T$ is the number of columns of this representative.
 
The elements in the dual canonical basis of $\CC[\Gr(k,n,\sim)]$ are in bijection with simple modules in the category $\mathcal{C}_{\ell}$ of $U_q(\widehat{\mathfrak{sl}_k})$ in Section \ref{subsec:Grassmannian cluster algebras and semistandard Young tableaux}, see \cite{HL10, CDFL}.

A one-column tableau is called a fundamental tableau if its entries are $[i,i+k] \setminus \{r\}$ for $r \in \{i+1, \ldots, i+k-1\}$. 
Any tableau in $\SSYT(k,[n])$ is $\sim$-equivalent to a unique semistandard tableau whose columns are fundamental tableaux, see Lemma 3.13 in \cite{CDFL}. A semistandard tableau whose columns are fundamental tableaux is called a small gap tableau. 

We now recall the explicit formula of $\ch(T)$ in the dual canonical basis of $\CC[\Gr(k,n,\sim)]$ in \cite[Theorem 5.8]{CDFL}. For $m \in \ZZ_{\ge 1}$, denote by $S_m$ the symmetric group on $[m] = \{1, \ldots, m\}$. For that we need to first define $w_T \in S_m$, $P_{u; T'}$, $u \in S_m$, for every $T \in \SSYT(k, [n])$, where $T'$ is the unique small gap tableau which is $\sim$-equivalent to $T$, and $m$ is the number of columns of $T'$. 

Let ${\bf i} = (i_1 \leq i_2 \dots \leq i_m)$ be the entries in the first row of $T'$, and let $r_1,\dots,r_m$ be the elements such that the $a$th column of $T'$ has entries $[i_a,i_a+n] \setminus \{r_a\}$. Let ${\bf j} = (j_1 \leq j_2 \leq \dots \leq j_m)$ be the elements $r_1,\dots,r_m$ written in weakly increasing order. 

For a semistandard tableau $T$, denote $P_T = P_{T_1} \cdots P_{T_m}$, where $T_1, \ldots, T_m$ are columns of $T$. For $u \in S_m$, we define $P_{u;T'} \in \CC[\Gr(k,n)]$ as follows. Provided $j_a \in [i_{u(a)}, i_{u(a)}+k]$ for all $a \in [m]$, define the tableau $\alpha(u;T')$ to be the semistandard tableau whose columns have entries $[i_{u(a)}, i_{u(a)}+k] \setminus \{j_a\}$ for $a \in [m]$, and define $P_{u; T'} = P_{\alpha(u;T')} \in \CC[\Gr(k,n)]$ to be the corresponding standard monomial. On the other hand, if $j_a \notin [i_{u_a}, i_{u(a)}+k]$ for some $a$, then the tableau $\alpha(u;T')$ is {\sl undefined} and $P_{u ;T'} = 0$. 

There is a unique $u \in S_m$ which is of maximal length with the property that the sets 
\[
\{[i_{u(a)},i_{u(a)}+k] \setminus \{j_a \} \}_{a \in [m]}
\]
describe the columns of $T'$. This $u$ is denoted by $u = w_{T}$.

By \cite[Theorem 5.8]{CDFL}, the element $\ch(T)$ in the dual canonical basis of $\CC[\Gr(k,n,\sim)]$ is given by 
\begin{align}\label{eq:formula of ch(T)}
\ch(T) = \sum_{u \in S_m} (-1)^{\ell(uw_T)} p_{uw_0, w_Tw_0}(1) P_{u; T'},
\end{align}
where $p_{u,v}(q)$ is a Kazhdan-Lusztig polynomial \cite{KL79}.

\subsection{Relation between dominant monomials and tableaux} \label{subsec:dominant monomials and tableaux}
 
In Section \ref{subsec:HL category and cluster algebra}, we recalled Hernandez and Leclerc's category $\mathcal{C}_{\ell}$. It is shown in Theorem 3.17 in \cite{CDFL} that in the case of $\mathfrak{g}=\mathfrak{sl}_k$, the monoid $\mathcal{P}^+_\ell$ (we take the height function to be $\xi(i)=i-2$, $i \in [k-1]$) of dominant monomials is isomorphic to the monoid of semistandard Young tableaux $\SSYT(k, [n], \sim)$, $n = k+\ell+1$. The correspondence of dominant monomials and tableaux is induced by the following map sending variables $Y_{i,s}$ to fundamental tableaux:
\begin{equation}\label{eq:monomToTableaux} 
Y_{i,s} \mapsto T_{i,s}, 
\end{equation}
where $T_{i,s}$ is a one-column tableau consisting of entries $\frac{i-s}{2}, \frac{i-s}{2}+1, \ldots, \frac{i-s}{2}+k-i-1, \frac{i-s}{2}+k-i+1, \ldots, \frac{i-s}{2}+k$. We denote the monomial corresponding to a tableau $T$ by $M_T$ and denote the tableau in ${\rm SSYT}(k, [n],\sim)$ corresponding to a monomial $M$ by $T_M$. Note that by the definition of $\mathcal{C}_{\ell}$ and the choice of the height function $\xi(i)=i-2$, $i \in [k-1]$, the indices of $Y_{i, s}$ in the highest $l$-weight monomials of simple modules in $\mathcal{C}_{\ell}$ satisfy $i-s \pmod 2=0$. 

When computing the monomial corresponding to a given tableau, we first decompose the tableau into a union of fundamental tableaux. Then we send each fundamental tableau to the corresponding $Y_{i,s}$. For example, the tableaux $[[1,2,4,6],[3,5,7,8]]$ (each list is a column of the tableau), $[[1,3,5,7],[2,4,6,8]]$ correspond to the modules
\begin{align*}
L(Y_{2,-6}Y_{1,-3}Y_{3,-3}Y_{2,0}), \quad L(Y_{1,-7}Y_{2,-4}Y_{1,-5}Y_{3,-1}Y_{2,-2}Y_{3,1}),
\end{align*}
respectively.   

Recall that a simple $U_q(\widehat{\mathfrak{g}})$-module $L(M)$ is called prime if it is not isomorphic to $L(M') \otimes L(M'')$ for any non-trivial modules $L(M')$, $L(M'')$ \cite{CP97}. A simple $U_q(\widehat{\mathfrak{g}})$-module $L(M)$ is called real if $L(M) \otimes L(M)$ is still simple \cite{Lec}. We say that a tableau $T$ is prime (resp. real) if the corresponding $U_q(\widehat{\mathfrak{sl}_k})$-module $L(M_T)$ is prime (resp. real). The problem of classification of prime $U_q(\widehat{\mathfrak{sl}_k})$-modules in the category $\mathcal{C}_{\ell}$ ($\ell \ge 0$) is equivalent to the problem of classification of prime tableaux in $\SSYT(k, [n], \sim)$, $n = k + \ell + 1$, \cite{CDFL}. 

\subsection{Weakly separated property and noncrossing property} \label{subsec:weakly separated and noncrossing}

For two $k$-element subsets $I,J \subset [n]$, denote $I < J$ if $\max(I) < \min(J)$. 
\begin{definition} [{\cite{LZ98}}] \label{def:weakly separted k-subsets}
A pair of $k$-element subsets $I,J$ is said to be weakly separated if one of the following holds:
\begin{itemize}
\item $I\setminus J = I_1 \sqcup I_2$, $I_1 < J \setminus I <I_2$,
\item $J\setminus I = J_1 \sqcup J_2$, $J_1 < I \setminus J <J_2$. 
\end{itemize}
\end{definition}

\begin{definition}[{\cite{SSW17}}] \label{def:noncrossing pairs}
A pair $I=\{i_1< \ldots < i_k\}$, $J=\{j_1<\ldots<j_k\}$ of $k$-subsets of $[n]$ is said to be noncrossing if for each $1 \le a < b \le k$, either the pair $\{i_a, i_{a+1}, \ldots, i_b\}$, $\{j_a, j_{a+1}, \ldots, j_b\}$ is weakly separated, or $\{i_{a+1}, \ldots, i_{b-1}\} \ne \{j_{a+1}, \ldots, j_{b-1}\}$. 
\end{definition}

\section{Explicit Description of 2-column Prime Tableaux} \label{sec:prime modules corresponding to 2-column tableaux}

In this section, we prove that a $2$-column tableau is prime if and only if it is the union of two one-column tableaux which are noncrossing and not weakly separated. We also compute the number of $2$-column prime tableaux in $\SSYT(k, [n])$.

\subsection{Semistandard Young tableaux and noncrossing tuples}

We will prove that there is a one to one correspondence between semistandard Young tableaux and noncrossing tuples. This result could be seen as a tableaux analog of Theorem 1.2 in \cite{E2021}. First we consider the case of $k=2$. 
\begin{lemma} \label{lem:semistandard tableau and noncrossing tuple k=2}
For every tableau $T \in {\rm SSYT}(2, [n])$ which has $m$ columns, there is a unique unordered $m$-tuple $(S_1, \ldots, S_m)$ of one-column tableaux which are pairwise noncrossing such that $T = S_1 \cup \cdots \cup S_m$. 
\end{lemma}

\begin{proof}
First note that for $2$-row one column tableaux $\scalemath{0.6}{ \begin{ytableau}
a \\ b
\end{ytableau}}$, $\scalemath{0.6}{\begin{ytableau}
a' \\ b'
\end{ytableau}}$, they are noncrossing if and only if they are weakly separated. If $b=a+1$, then $\scalemath{0.6}{\begin{ytableau}
a \\ b
\end{ytableau}}$ is weakly separated with any $2$-row one-column tableau. Let $T$ be a $2$-row tableau and let $T'$ be the tableau obtained from $T$ by removing all factors of the form $\scalemath{0.6}{\begin{ytableau}
a \\ \scalemath{0.75}{ a+1 }
\end{ytableau}}$. Denote these frozen factors by $T''_1, \ldots, T''_t$. By Theorem 1.1 in \cite{CDFL}, $T'$ corresponds to a simple $U_q(\widehat{\mathfrak{sl}_2})$-module $L(M_T) = L(M_{T'})$. By Sections 4.8, 4.9, 4.11 in \cite{CP91}, every prime $U_q(\widehat{\mathfrak{sl}_2})$-module is a Kirillov-Reshetikhin module and every simple $U_q(\widehat{\mathfrak{sl}_2})$-module is decomposed as a tensor product of Kirillov-Reshetikhin modules (note that evaluation modules of $U_q(\widehat{\mathfrak{sl}_2})$ are Kirillov-Reshetikhin modules). Therefore 
\begin{align} \label{eq:decomposition of affine Uqsl2 modules}
\chi_q(L(M_T)) = \chi_q(L(M_1)) \cdots \chi_q(L(M_r))
\end{align}
for some Kirillov-Reshetikhin modules $L(M_1), \ldots, L(M_r)$. Every Kirillov-Reshetikhin module corresponds to a one-column tableau, see Section 3.3 in \cite{CDFL}. Let $T_{M_1}, \ldots, T_{M_r}$ be the one-column tableaux corresponding to $L(M_1), \ldots, L(M_r)$ respectively. By Equation (\ref{eq:decomposition of affine Uqsl2 modules}), 
we have that for any $i, j$, $L(M_i) \otimes L(M_j)$ is simple. Hence by Theorem 1.1 in \cite{LZ98}, $T_{M_i}$ and $T_{M_j}$ are weakly separated. Therefore $T = T_{M_1} \cup \cdots \cup T_{M_r} \cup T''_1 \cup \cdots \cup T''_t$ and any two one-column tableaux in the $\cup$-product are weakly separated.  
\end{proof}

\begin{example}
Let $T = \scalemath{0.6}{\begin{ytableau}
1 & 2 & 2 & 3 & 3 & 4 \\
3 & 5 & 5 & 6 & 8 & 9
\end{ytableau}}$. The factors of $T$ of the form $ \scalemath{0.6}{ \begin{ytableau}
a \\ \scalemath{0.6}{ a+1 }
\end{ytableau} }$ are $\scalemath{0.6}{ \begin{ytableau}
2 \\ 3
\end{ytableau}}$, $\scalemath{0.6}{\begin{ytableau}
4 \\ 5
\end{ytableau}}$. Removing these factors, we obtain $T' = \scalemath{0.6}{\begin{ytableau}
1 & 2 & 3 & 3 \\
5 & 6 & 8 & 9
\end{ytableau}}$. The corresponding $U_q(\widehat{\mathfrak{sl}_2})$-module is 
\begin{align*}
L(M_T) =L(Y_{1,-1}Y_{1,-3}^2Y_{1,-5}^4Y_{1,-7}^3Y_{1,-9}^2Y_{1,-11}^2Y_{1,-13}),    
\end{align*}
see Section \ref{subsec:dominant monomials and tableaux}. By taking all maximal strings of Kirillov-Reshetikhin modules, we have that 
\begin{align*}
\chi_q(L(M_T)) = \chi_q(L(M_1)) \chi_q(L(M_2)) \chi_q(L(M_3)) \chi_q(L(M_4)),    
\end{align*}
where 
\begin{align*}
M_1 = Y_{1,-1}Y_{1,-3}\cdots Y_{1,-13}, \ M_2 = Y_{1,-3}Y_{1,-5} \cdots Y_{1,-11}, \ M_3 = Y_{1,-5}, \ M_4 = Y_{1,-5}Y_{1,-7}.  
\end{align*}
The corresponding one-column tableaux are $\scalemath{0.6}{\begin{ytableau}
1 \\ 9
\end{ytableau}}$, $\scalemath{0.6}{\begin{ytableau}
2 \\ 8
\end{ytableau}}$, $\scalemath{0.6}{\begin{ytableau}
3 \\ 5
\end{ytableau}}$, $\scalemath{0.6}{\begin{ytableau}
3 \\ 6
\end{ytableau}}$, respectively. Therefore the unordered $6$-tuple of pairwise noncrossing one-column tableaux corresponding to $T$ is 
\begin{align*}
\scalemath{0.6}{ \left(  \begin{ytableau}
1 \\ 9
\end{ytableau}, \begin{ytableau}
2 \\ 8
\end{ytableau}, \begin{ytableau}
3 \\ 5
\end{ytableau}, \begin{ytableau}
3 \\ 6
\end{ytableau}, \begin{ytableau}
2 \\ 3
\end{ytableau}, \begin{ytableau}
4 \\ 5
\end{ytableau} \right) . }
\end{align*}

\end{example}

\begin{lemma} \label{lem:semistandard tableau and noncrossing tuple}
For every tableau $T \in {\rm SSYT}(k, [n])$ which has $m$ columns, there is a unique unordered $m$-tuple $(S_1, \ldots, S_m)$ of one-column tableaux which are pairwise noncrossing such that $T = S_1 \cup \cdots \cup S_m$. 
\end{lemma}

\begin{proof}
We prove by induction on $k$. The result is clearly true in the case of $k=1$. The case of $k=2$ is proved in Lemma \ref{lem:semistandard tableau and noncrossing tuple k=2}. 

Suppose that $k \ge 3$ and the result is true for $\SSYT(k', [n])$ for any $k' \le k-1$. Let $T \in {\rm SSYT}(k, [n])$. Let $T'$ be the sub-tableau of $T$ consisting of the first $k-1$ rows of $T$. By induction hypothesis, there there is a unique unordered $m$-tuple $S'=(S_1', \ldots, S_m')$ of one-column tableaux which are pairwise noncrossing such that $T' = S_1' \cup \cdots \cup S_m'$.  

Let $T''$ be the sub-tableau of $T$ consisting of the last $k-1$ rows of $T$. By induction hypothesis, there there is a unique unordered $m$-tuple $S''=(S_1'', \ldots, S_m'')$ of one-column tableaux which are pairwise noncrossing such that $T'' = S_1'' \cup \cdots \cup S_m''$.  

Let $T'''$ be the sub-tableau of $T$ consisting of the middle $k-2$ rows of $T$. By induction hypothesis, there there is a unique unordered $m$-tuple $S'''=(S_1''', \ldots, S_m''')$ of one-column tableaux which are pairwise noncrossing such that $T''' = S_1''' \cup \cdots \cup S_m'''$.

By Definition \ref{def:noncrossing pairs}, for any tuple of noncrossing tableaux, if we remove the first or the last entries of all tableaux in the tuple, the resulting tuple is still noncrossing. Therefore the tuple $S'''$ is obtained from $S'$ by removing the first entries, and the tuple $S'''$ is also obtained from $S''$ by removing the last entries. We can choose some ordering of $S', S'', S'''$ such that for each $j \in [m]$, $S_{j}'''=S_{j}' \cap S_{j}''$. Let $S_j = S_{j}' \cup S_{j}''$, $j \in [m]$. Then $T=S_1 \cup \cdots \cup S_m$ and $S_1, \ldots, S_m$ are pairwise noncrossing. 
\end{proof}

\begin{example}
Let $T= \scalemath{0.6}{ \begin{ytableau} 1 & 2 \\ 3 & 4 \\ 5 & 6 \\ 7 & 8 \end{ytableau}}$. The unique noncrossing 2-tuple of one-column tableaux corresponding to $T$ is $( \scalemath{0.6}{ \begin{ytableau} 1 \\ 4 \\ 5 \\ 8 \end{ytableau}}, \scalemath{0.6}{ \begin{ytableau} 2 \\ 3 \\ 6 \\ 7 \end{ytableau} } )$. 
\end{example}

\subsection{2-column prime tableaux} \label{subsec:2-column prime tableaux}

\begin{lemma} \label{lem:T1T2 non crossing and not weakly separated implies that if S1 S2 is T1 T2 then S1 S2 not weakly separated}
Suppose that $T_1, T_2$ are 1-column tableaux and they are noncrossing and not weakly separated. Then for any pair of 1-column tableaux $S_1, S_2$ such that $S_1 \cup S_2 = T_1 \cup T_2$, we have that $S_1, S_2$ are not weakly separated. 
\end{lemma}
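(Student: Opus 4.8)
\emph{Strategy.} I would deduce the statement from the uniqueness of the noncrossing one-column decomposition (Lemma \ref{lem:semistandard tableau and noncrossing tuple}) together with the following general fact about $k$-subsets, which I will call $(\star)$: \textbf{if two $k$-subsets $I,J\subseteq[n]$ are weakly separated, then they are noncrossing} (equivalently, a crossing pair is never weakly separated). Granting $(\star)$, the lemma is immediate. Write $U=T_1\cup T_2$. Since $U$ has $2k$ boxes, any decomposition of $U$ into pairwise noncrossing one-column tableaux has exactly two columns, so by Lemma \ref{lem:semistandard tableau and noncrossing tuple} the pair $\{T_1,T_2\}$ is the \emph{unique} such decomposition. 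Now let $S_1,S_2$ be one-column tableaux with $S_1\cup S_2=U$. If $\{S_1,S_2\}=\{T_1,T_2\}$, then $S_1,S_2$ are not weakly separated by hypothesis. Otherwise $\{S_1,S_2\}$ cannot be noncrossing, as that would contradict the uniqueness above; hence $S_1,S_2$ cross, and $(\star)$ shows they are not weakly separated. Either way we are done.

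\emph{Proof of $(\star)$.} Write $I=\{i_1<\cdots<i_k\}$, $J=\{j_1<\cdots<j_k\}$. I would first record the standard reformulation of Definition \ref{def:weakly separted k-subsets}: $I,J$ are weakly separated if and only if there is no alternation, i.e. no four values $p<q<r<s$ with $\{p,r\}\subseteq I\setminus J$ and $\{q,s\}\subseteq J\setminus I$, or with $\{p,r\}\subseteq J\setminus I$ and $\{q,s\}\subseteq I\setminus J$. Reading the symmetric difference in increasing order, absence of such an alternation means it has at most three blocks of the shape $I\setminus J,\ J\setminus I,\ I\setminus J$ or $J\setminus I,\ I\setminus J,\ J\setminus I$, which is exactly the two bullets of Definition \ref{def:weakly separted k-subsets}. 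By Definition \ref{def:noncrossing pairs}, $I,J$ fail to be noncrossing precisely when there is a window $a<b$ whose interiors agree, $i_c=j_c$ for $a<c<b$, while the endpoint pair $\{i_a,i_b\},\{j_a,j_b\}$ is not weakly separated; a short case check (the cases $i_a=j_a$ or $i_b=j_b$ being automatically weakly separated) shows this forces the endpoints to \emph{cross}, i.e. $i_a<j_a,\ i_b<j_b$ or $i_a>j_a,\ i_b>j_b$. Assume such a crossing window exists; by the symmetry $I\leftrightarrow J$ I may take $i_a<j_a$ and $i_b<j_b$. Since $i_a<j_a<i_{a+1}=j_{a+1}$, no element of $I$ equals $j_a$, so $j_a\in J\setminus I$; likewise $j_{b-1}=i_{b-1}<i_b<j_b$ gives $i_b\in I\setminus J$, and $j_a<i_b$.

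\emph{Ruling out weak separation.} It remains to contradict weak separation, which I would do by a cardinality count excluding each bullet of Definition \ref{def:weakly separted k-subsets} separately. Suppose the first bullet holds, $I\setminus J=I_1\sqcup I_2$ with $I_1<J\setminus I<I_2$. Then $i_b\in I\setminus J$ lies above $j_a\in J\setminus I$, forcing $i_b\in I_2$, so every element of $J\setminus I$ is below $i_b$. Cutting at $S=\{x\le i_b\}$ gives $|I\cap S|=b$ and, since $i_{b-1}<i_b<j_b$ with $i_b\notin J$, also $|J\cap S|=b-1$; subtracting the common term $|(I\cap J)\cap S|$ yields $|(I\setminus J)\cap S|=|(J\setminus I)\cap S|+1$. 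But $(J\setminus I)\cap S=J\setminus I$, while $(I\setminus J)\cap S\subseteq I\setminus J$ and $|I\setminus J|=|J\setminus I|$, giving $|I\setminus J|+1\le|I\setminus J|$, a contradiction. The second bullet is excluded by the mirror-image count: it forces $j_a$ into the lower block, so all of $I\setminus J$ lies above $j_a$, and cutting at $S'=\{x\ge j_a\}$ produces the symmetric contradiction. Hence no crossing window can occur for a weakly separated pair, which proves $(\star)$. I expect the main obstacle to be exactly this last step: verifying the two local membership facts $j_a\in J\setminus I$, $i_b\in I\setminus J$, and, above all, organizing the count so that \emph{both} bullets of weak separation are ruled out uniformly; the reduction to a single crossing window and the $I\leftrightarrow J$ symmetry are the devices that keep the casework finite.
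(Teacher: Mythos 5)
Your proposal is correct, and its overall skeleton coincides with the paper's: both reduce the problem, via the uniqueness in Lemma~\ref{lem:semistandard tableau and noncrossing tuple}, to the two cases $\{S_1,S_2\}=\{T_1,T_2\}$ (done by hypothesis) and $S_1,S_2$ crossing, and then argue that a crossing pair cannot be weakly separated. The difference is in how that last implication is handled. The paper simply asserts that if some window (rows $a$ through $b$) of the two columns fails to be weakly separated then the full pair fails to be weakly separated, and combines this with the observation that having all windows weakly separated would contradict crossing; no justification is given for the assertion. You instead prove the needed implication $(\star)$ in full: you extract from Definition~\ref{def:noncrossing pairs} a window with matching interiors whose endpoints interleave, deduce $j_a\in J\setminus I$, $i_b\in I\setminus J$ with $j_a<i_b$, and then rule out each bullet of Definition~\ref{def:weakly separted k-subsets} by the prefix/suffix cardinality count. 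I checked the count in both cases and it is sound (bullet one forces $J\setminus I\subseteq\{x\le i_b\}$ while $|I\cap\{x\le i_b\}|-|J\cap\{x\le i_b\}|=1$, and symmetrically for bullet two), as is the reduction to the interleaved configuration and the $I\leftrightarrow J$ symmetry. So your argument buys a complete, self-contained verification of precisely the step the paper leaves implicit, at the cost of some elementary casework; note only that your $(\star)$ is stated for arbitrary weakly separated pairs, whereas all that is needed (and all the paper's definition of crossing supplies) is the case of a bad window with matching interiors, which is exactly what your proof uses.
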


\begin{proof}
Suppose that $T_1, T_2$ are 1-column tableaux and they are noncrossing and not weakly separated. By Lemma \ref{lem:semistandard tableau and noncrossing tuple}, for every pair of 1-column tableaux $S_1, S_2$ such that $S_1 \cup S_2 = T_1 \cup T_2$, either $\{S_1, S_2\} = \{T_1, T_2\}$ or $S_1, S_2$ are crossing. If $\{S_1, S_2\} = \{T_1, T_2\}$, then $S_1, S_2$ are not weakly separated. 

If $\{S_1, S_2\} \ne \{T_1, T_2\}$, then $S_1, S_2$ are crossing. If there are $1 \le a < b \le k$ such that the sub-tableau of $S_1$ consisting of the $a$th to $b$th rows of $S_1$ and the sub-tableau of $S_2$ consisting of the $a$th to $b$th rows of $S_2$ are not weakly separated, then $S_1$, $S_2$ are not weakly separated. 

Now suppose that for any $1 \le a < b \le k$, the sub-tableau of $S_1$ consisting of the $a$th to $b$th rows of $S_1$ and the sub-tableau of $S_2$ consisting of the $a$th to $b$th rows of $S_2$ are weakly separated. This contradicts the fact that $S_1$, $S_2$ are crossing.
\end{proof}
 
\begin{example}
Let $T_1= \scalemath{0.6}{ \begin{ytableau} 1 \\ 4 \\ 5 \\ 8 \end{ytableau} }, T_2 = \scalemath{0.6}{ \begin{ytableau} 2 \\ 3 \\ 6 \\ 7 \end{ytableau} }$. We have that $T_1, T_2$ are noncrossing and not weakly separated. All pairs of 1-column tableaux $S_1, S_2$ such that $S_1 \cup S_2 = T_1 \cup T_2$ are 
\begin{align*}
\scalemath{0.7}{
(\begin{ytableau} 1 \\ 3 \\ 5 \\ 7 \end{ytableau},  \begin{ytableau} 2 \\ 4 \\ 6 \\ 8 \end{ytableau}), 
(\begin{ytableau} 1 \\ 3 \\ 5 \\ 8 \end{ytableau},   \begin{ytableau} 2 \\ 4 \\ 6 \\ 7 \end{ytableau}), 
(\begin{ytableau} 1 \\ 3 \\ 6 \\ 7 \end{ytableau},   \begin{ytableau} 2 \\ 4 \\ 5 \\ 8 \end{ytableau}), 
(\begin{ytableau} 1 \\ 3 \\ 6 \\ 8 \end{ytableau},   \begin{ytableau} 2 \\ 4 \\ 5 \\ 7 \end{ytableau}), 
(\begin{ytableau} 1 \\ 4 \\ 5 \\ 7 \end{ytableau},  \begin{ytableau} 2 \\ 3 \\ 6 \\ 8 \end{ytableau}), 
(\begin{ytableau} 1 \\ 4 \\ 5 \\ 8 \end{ytableau},   \begin{ytableau} 2 \\ 3 \\ 6 \\ 7 \end{ytableau}), 
(\begin{ytableau} 1 \\ 4 \\ 6 \\ 7 \end{ytableau},   \begin{ytableau} 2 \\ 3 \\ 5 \\ 8 \end{ytableau}),
(\begin{ytableau} 1 \\ 4 \\ 6 \\ 8 \end{ytableau},   \begin{ytableau} 2 \\ 3 \\ 5 \\ 7 \end{ytableau}).  }
\end{align*} 
All of these pairs are not weakly separated. 
\end{example}

Every Pl\"{u}cker coordinate corresponds to a one-column tableau. Let $L(M_J)$, $L(M_{J'})$ be simple $U_q(\widehat{\mathfrak{sl}_k})$-modules corresponding to the Pl\"{u}cker coordinates $P_{J}, P_{J'}$ respectively. By Theorem 1.6 in \cite{OPS15}, $P_J$ and $P_{J'}$ are in the same cluster of the Grassmannian cluster algebra if and only if $J, J'$ are weakly separated. Recall that there is an isomorphism between $\CC[\Gr(k,n,\sim)]$ and $K_0(\mathcal{C}_{\ell})$, $n=k+\ell+1$, see Section \ref{sec:preliminaries}. Therefore for two $k$-subsets $J, J'$ whose entries are not consecutive sets, $L(M_J)$, $L(M_{J'})$ are in the same cluster of the cluster algebra $K_0(\mathcal{C}_{\ell})$ if and only if $J, J'$ are weakly separated. 

\begin{lemma} \label{lem:I J weakly separated implies that tensor product is simple}
Suppose that two $k$-element subsets $J, J'$ of $[n]$ are weakly separated. Then $L(M_J) \otimes L(M_{J'})$ is simple.
\end{lemma}

\begin{proof}
Suppose that $J, J'$ are weakly separated. Then $P_{J}, P_{J'}$ are in the same cluster of the Grassmannian cluster algebra, and $L(M_{J}), L(M_{J'})$ are in the same cluster of the cluster algebra $K_0(\mathcal{C}_{\ell})$. By the result in \cite{Qin17, KKOP21, KKOP24} that cluster monomials in $\mathcal{C}_{\ell}$ are simple modules, we have that $L(M_{J}) \otimes L(M_{J'})$ is simple.
\end{proof}

We conjecture that the converse of Lemma \ref{lem:I J weakly separated implies that tensor product is simple} is also true.
\begin{conjecture} \label{conjecture: tensor product is simple implies that I J weakly separated}
Let $J, J'$ be two $k$-element subsets of $[n]$. Suppose that $L(M_J) \otimes L(M_{J'})$ is simple, then $J, J'$ are weakly separated.
\end{conjecture}

Let $L(M)$ be a simple $U_q(\widehat{\mathfrak{sl}_k})$-module such that $T_M$ is a 2-column tableau. By Lemma \ref{lem:semistandard tableau and noncrossing tuple}, there is a unique pair $T_1, T_2$ of one-column tableaux $T_1, T_2$ such that $T_1, T_2$ are noncrossing and $T_M = T_1 \cup T_2$. Assume that Conjecture \ref{conjecture: tensor product is simple implies that I J weakly separated} is true. Then we have the following theorem.
\begin{theorem} \label{thm: Uqslkhat modules corresponding to 2 column tableaux}
Let $L(M)$ be a simple $U_q(\widehat{\mathfrak{sl}_k})$-module such that $T_M$ is a 2-column tableau. Then the module $L(M)$ is prime if and only if $T_1, T_2$ are not weakly separated, where $T_1, T_2$ are one-column tableaux such that $T_M = T_1 \cup T_2$ and $T_1, T_2$ are noncrossing.
\end{theorem}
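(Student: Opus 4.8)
The plan is to prove the two directions separately, using the structural result of Lemma \ref{lem:semistandard tableau and noncrossing tuple} to control the possible factorizations $T_M = S_1 \cup S_2$ into one-column tableaux. Throughout I identify primality of $L(M)$ with the statement that $T_M$ admits no nontrivial factorization as a $\cup$-product of tableaux corresponding to simple modules, which is the tableau reformulation of primality recorded in Section \ref{subsec:dominant monomials and tableaux}: $L(M)$ is prime if and only if $T_M$ cannot be written as $T_M = A \cup B$ with $A, B$ both nonempty (after reducing away trivial factors) and $L(M) \cong L(M_A) \otimes L(M_B)$. Since $T_M$ has exactly two columns, the only way to split it nontrivially into a $\cup$-product is to write $T_M = S_1 \cup S_2$ where $S_1, S_2$ are the two columns of some rearrangement, i.e., each of $S_1, S_2$ is a single one-column tableau.

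\textbf{The ``if'' direction.} Suppose $T_M = T_1 \cup T_2$ with $T_1, T_2$ noncrossing and not weakly separated. I want to show $L(M)$ is prime. By Lemma \ref{lem:semistandard tableau and noncrossing tuple}, the pair $(T_1, T_2)$ is the \emph{unique} unordered noncrossing tuple whose $\cup$-product is $T_M$. Now I invoke the key input from the theory (via \cite{CDFL, HL10}, specifically the characterization of when a tensor product $L(M_{S_1}) \otimes L(M_{S_2})$ of modules attached to one-column tableaux is simple): the tableau $T_M = S_1 \cup S_2$ is the tableau of such a tensor product $L(M_{S_1}) \otimes L(M_{S_2})$ precisely when $S_1, S_2$ are weakly separated. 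Hence a nontrivial prime factorization of $L(M)$ into two one-column factors corresponds exactly to a weakly separated pair $(S_1, S_2)$ with $S_1 \cup S_2 = T_M$. But Lemma \ref{lem:T1T2 non crossing and not weakly separated implies that if S1 S2 is T1 T2 then S1 S2 not weakly separated} shows that \emph{every} pair $(S_1, S_2)$ with $S_1 \cup S_2 = T_1 \cup T_2$ fails to be weakly separated. Therefore no factorization of $L(M)$ into two one-column modules exists, and since a $2$-column tableau can only factor into one-column pieces, $L(M)$ is prime.

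\textbf{The ``only if'' direction.} Conversely, suppose $L(M)$ is prime with $T_M$ of $2$-column shape. By Lemma \ref{lem:semistandard tableau and noncrossing tuple}, there is a unique unordered noncrossing pair $(T_1, T_2)$ of one-column tableaux with $T_M = T_1 \cup T_2$; I must show $T_1, T_2$ are not weakly separated. Suppose for contradiction they \emph{are} weakly separated. Then by the simplicity criterion cited above, $L(M_{T_1}) \otimes L(M_{T_2})$ is simple, and its associated tableau is $T_1 \cup T_2 = T_M$; hence $L(M) \cong L(M_{T_1}) \otimes L(M_{T_2})$ is a nontrivial tensor product, contradicting primality. (One must check the factors are nontrivial, i.e., neither $T_i$ reduces to $\mathds{1}$; since $T_M$ genuinely has two columns this holds after we have removed trivial/frozen columns, which is where a small case-check is warranted.) Thus $T_1, T_2$ are not weakly separated, completing the proof.

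The main obstacle I expect is making precise and citing the \emph{simplicity-versus-weak-separation} correspondence for pairs of one-column tableaux: the assertion that $L(M_{S_1}) \otimes L(M_{S_2})$ is simple if and only if $S_1, S_2$ are weakly separated, and that when it is simple its tableau is $S_1 \cup S_2$. For $k=2$ this is essentially Theorem 1.1 in \cite{LZ98} combined with the $U_q(\widehat{\mathfrak{sl}_2})$ structure used in Lemma \ref{lem:semistandard tableau and noncrossing tuple k=2}; for general $k$ one needs the analogous statement about real prime fundamental/one-column modules and their compatibility, which should be assembled from \cite{CDFL} and the weak separation theory of \cite{LZ98, SSW17}. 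Establishing this correspondence cleanly — and verifying that the only nontrivial $\cup$-factorizations of a $2$-column tableau are into two one-column tableaux, so that Lemma \ref{lem:semistandard tableau and noncrossing tuple} indeed exhausts all factorizations — is the crux; once it is in hand, both directions follow formally from Lemmas \ref{lem:semistandard tableau and noncrossing tuple} and \ref{lem:T1T2 non crossing and not weakly separated implies that if S1 S2 is T1 T2 then S1 S2 not weakly separated}.
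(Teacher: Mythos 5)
Your proposal is correct and follows essentially the same route as the paper: it uses the uniqueness of the noncrossing decomposition (Lemma \ref{lem:semistandard tableau and noncrossing tuple}), the fact that every re-decomposition of a noncrossing non-weakly-separated pair again fails weak separation (Lemma \ref{lem:T1T2 non crossing and not weakly separated implies that if S1 S2 is T1 T2 then S1 S2 not weakly separated}), and the criterion that $L(M_{S_1})\otimes L(M_{S_2})$ is simple if and only if $S_1,S_2$ are weakly separated. The ``crux'' you flag is exactly what the paper supplies by citing \cite[Theorem 1.1]{LZ98} together with \cite[Proposition 3]{Sco05} on quasi-commuting quantum Pl\"ucker coordinates; with that citation in place your argument matches the published proof step for step.
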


\begin{proof}
Let $T_1, T_2$ be one-column tableaux such that $T_1, T_2$ are noncrossing and $T_M = T_1 \cup T_2$. Suppose that $T_1, T_2$ are weakly separated. By Lemma \ref{lem:I J weakly separated implies that tensor product is simple}, we have that $L(M_{T_1}) \otimes L(M_{T_2})$ is simple. It follows that $\chi_q(L(M)) = \chi_q(L(M_{T_1})) \chi_q(L(M_{T_2}))$. Therefore $L(M)$ is not prime.

Now suppose that $T_1, T_2$ are not weakly separated. By Lemma \ref{lem:T1T2 non crossing and not weakly separated implies that if S1 S2 is T1 T2 then S1 S2 not weakly separated}, for any pair $T_1', T_2'$ of 1-column tableaux such that $T_1 \cup T_2 = T_1' \cup T_2'$, we have that $T_1', T_2'$ are not weakly separated. Since we assume that Conjecture \ref{conjecture: tensor product is simple implies that I J weakly separated} is true, we have that $L(M_{T_1'}) \otimes L(M_{T_2'})$ is not simple. Therefore $\chi_q(L(M)) \ne \chi_q(L(M_{T_1'}))\chi_q(L(M_{T_2'}))$, for any pair of 1-column tableaux $T_1', T_2'$ such that $T_M = T_1' \cup T_2'$. Hence $L(M)$ is prime. 
\end{proof}

Assume that Conjecture \ref{conjecture: tensor product is simple implies that I J weakly separated} is true. Then we have Theorem \ref{thm: Uqslkhat modules corresponding to 2 column tableaux}. Let $T$ be a 2-column tableau. By Lemma \ref{lem:semistandard tableau and noncrossing tuple}, there is a unique pair $T_1, T_2$ of one-column tableaux $T_1, T_2$ such that $T_1, T_2$ are noncrossing and $T = T_1 \cup T_2$. Theorem \ref{thm: Uqslkhat modules corresponding to 2 column tableaux} implies that the 2-column tableau $T$ is prime if and only if $T_1, T_2$ are not weakly separated. 

Denote $\genfrac{(}{)}{0pt}{}{n}{a, b, c}=\frac{n!}{a!b!c!}$ and $I \Delta J = (I \setminus J) \cup (J \setminus I)$ for two sets $I, J$. 
\begin{proposition} \label{prop:number of 2-column prime tableaux}
For $k \le n/2$, the number of 2-column prime tableaux is $a_{k,n,2}-b_{k,n}$, where $a_{k,n,m}=\prod_{i=1}^k \prod_{j=1}^m \frac{n-i+j}{k+m-i-j+1}$ and $b_{k,n} = \binom{n}{k} + \sum_{j=1}^{k} j \genfrac{(}{)}{0pt}{}{n}{k-j,\, 2j,\, n-k-j}$.
\end{proposition}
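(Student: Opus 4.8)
The plan is to convert the characterization of primality in Theorem~\ref{thm: Uqslkhat modules corresponding to 2 column tableaux} into an enumeration. By Lemma~\ref{lem:semistandard tableau and noncrossing tuple}, sending a $2$-column tableau to its unique noncrossing decomposition $\{T_1,T_2\}$ gives a bijection between $2$-column tableaux in $\SSYT(k,[n])$ and unordered noncrossing pairs of $k$-subsets of $[n]$ (with $T_1=T_2$ allowed). Equivalently, $2$-column tableaux are exactly the semistandard Young tableaux of rectangular shape with $k$ rows and $2$ columns and entries in $[n]$, whose number is, by the hook content formula, precisely $a_{k,n,2}$. By Theorem~\ref{thm: Uqslkhat modules corresponding to 2 column tableaux} such a tableau is prime iff its noncrossing pair $\{T_1,T_2\}$ is not weakly separated. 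Hence the number of $2$-column prime tableaux equals $a_{k,n,2}$ minus the number of unordered pairs that are simultaneously noncrossing and weakly separated, and the whole task reduces to showing that this latter number is $b_{k,n}$.

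The first fact I would establish is the auxiliary claim that every weakly separated pair of $k$-subsets is automatically noncrossing; granting it, ``noncrossing and weakly separated'' simplifies to ``weakly separated,'' and it remains to count weakly separated unordered pairs. I expect this auxiliary claim to be the main obstacle, since it is exactly the point where Definitions~\ref{def:weakly separted k-subsets} and~\ref{def:noncrossing pairs} must be compared. I would argue it by contraposition: if $\{I,J\}$ violates the noncrossing condition through a window $[a,b]$, then the interiors $\{i_{a+1},\dots,i_{b-1}\}=\{j_{a+1},\dots,j_{b-1}\}$ coincide and lie in $I\cap J$, which pins the endpoints so that $j_a\in J\setminus I$ and $i_b\in I\setminus J$ sit on opposite sides of this common interior in an interleaving pattern; tracing this interleaving through the increasing order of $I\Delta J$ produces an alternation of the two colours $I\setminus J$ and $J\setminus I$ that prevents either colour class from forming a single contiguous block, contradicting weak separation. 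The degenerate window $b=a+1$ (empty interior) is handled the same way, noting that two consecutive elements of $I$ interleaving with two of $J$ cannot be shared and hence land in $I\Delta J$.

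With the auxiliary claim in hand, the remaining step is a direct count of weakly separated unordered pairs $\{I,J\}$ of $k$-subsets of $[n]$. The diagonal case $I=J$ contributes ${n \choose k}$. For $I\neq J$ with $|I\Delta J|=2j$, $1\le j\le k$, I would first choose the ordered set partition $[n]=(I\cap J)\sqcup (I\Delta J)\sqcup\big([n]\setminus(I\cup J)\big)$ of sizes $(k-j,\,2j,\,n-k-j)$, giving ${n \choose k-j,2j,n-k-j}$ choices; here the hypothesis $k\le n/2$ is used to guarantee $n-k-j\ge n-2k\ge 0$, so that every multinomial is genuine. Given these blocks, weak separation holds precisely when, reading $I\Delta J$ in increasing order, one of the two colour classes $I\setminus J,\ J\setminus I$ is a single contiguous block of length $j$; an inclusion--exclusion on ``$I\setminus J$ contiguous or $J\setminus I$ contiguous'' counts $(j+1)+(j+1)-2=2j$ such ordered $2$-colourings, and since interchanging the two classes is a fixed-point-free involution on them, there are $j$ unordered ones.

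Summing the contributions yields
\[
b_{k,n}={n \choose k}+\sum_{j=1}^{k} j\,{n \choose k-j,2j,n-k-j},
\]
so the number of noncrossing weakly separated pairs is $b_{k,n}$ and the number of $2$-column prime tableaux is $a_{k,n,2}-b_{k,n}$, as claimed. A sanity check built into the plan is the case $k=2$, where noncrossing coincides with weak separation (as in the proof of Lemma~\ref{lem:semistandard tableau and noncrossing tuple k=2}): there one predicts no prime $2$-column tableaux, i.e.\ $a_{2,n,2}=b_{2,n}$, which is easily verified and corroborates the two formulas.
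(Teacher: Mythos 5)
Your proposal is correct and follows essentially the same route as the paper: count all $2$-column tableaux as $a_{k,n,2}$ by the hook content formula, and subtract the number of weakly separated unordered pairs, obtained as the diagonal term ${n \choose k}$ plus ${n \choose k-j,2j,n-k-j}$ choices of blocks times the $j$ unordered contiguous-block colourings of $I \Delta J$. The one point you treat more explicitly than the paper is the auxiliary fact that weak separation implies noncrossing (needed to replace ``noncrossing and weakly separated'' by ``weakly separated''); your alternation sketch is essentially right, but note that a window endpoint $i_a$ or $j_b$ \emph{can} lie in $I \cap J$, so the argument needs the extra observation that in that case a counting of elements below $i_a$ (resp.\ above $j_b$) still produces an element of $I\setminus J$ (resp.\ $J\setminus I$) completing the forbidden alternation.
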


\begin{proof}
The number of semistandard Young tableaux of rectangular shape with $k$ rows and with entries in $\{1,\ldots,n\}$ and with $m$ columns is $a_{k,n,m}$, see \cite{Sta99}. 

Assume that $k \le n/2$. If $I=J$, then $I, J$ are weakly separated and there are $\binom{n}{k}$ choices of $I=J$. Now assume that $I \ne J$. Denote $|I-J|=|J-I|=j$. Since $|I \cap J|=k-j$, $|I \Delta J|=2j$, there are $\genfrac{(}{)}{0pt}{}{n}{k-j, 2j, n-k-j}$ ways to fix the sets $I \cap J$ and $I \Delta J$. 

Since either $I \setminus J$ or $J \setminus I$ should be a segment of $s$ consecutive elements of the $2j$ elements in $I \Delta J$, there are $2j$ choices of $I - J$. Since the pair $(I, J)$ is unordered, there are $\frac{1}{2} \sum_{j=1}^{k} 2 j \genfrac{(}{)}{0pt}{}{n}{k-j, 2j, n-k-j}$ choices of weakly separated pairs $(I, J)$ (unordered) in the case of $I \ne J$. It follows that the number of unordered weakly separated pairs among all Pl\"{u}cker coordinates is $b_{k,n}$. 

Therefore the number of 2-column prime tableaux is $a_{k,n,2}-b_{k,n}$.
\end{proof}

\begin{remark}
It is conjectured in \cite{BBGL} that for $k \le n/2$, there are 
\begin{align*}
\sum_{r=3}^{k} \left( \frac{2r}{3} \cdot p_1(r) +  2r \cdot p_2(r) + 4r \cdot p_3(r) \right) \cdot \binom{n}{2r} \binom{n-2r}{k-r}
\end{align*} 
$2$-column cluster variables in $\CC[\Gr(k,n)]$, where $p_i(r)$ is the number of partitions $r=r_1+r_2+r_3$ such that $r_1,r_2,r_3 \in \ZZ_{\ge 1}$ and $|\{r_1,r_2,r_3\}|=i$. The number $a_{k,n,2}-b_{k,n}$ in Proposition \ref{prop:number of 2-column prime tableaux} includes prime tableaux which are not cluster variables. 
\end{remark}

\section{\texorpdfstring{2-column prime tableaux for $\CC[\Gr(4,8)]$ and $\CC[\Gr(5,10)]$}{2-column prime tableaux for CC[Gr(4,8)] and CC[Gr(5,10)]}}
\label{sec:2-column prime tableaux in Gr48 and Gr510}

In this section, we list prime tableaux for $\CC[\Gr(4,8)]$ and $\CC[\Gr(5,10)]$.

\subsection{Promotion of tableaux}

Promotion is an operator on the set of semistandard Young tableaux defined in terms of ``jeu de taquin'' sliding moves \cite{Sch63, Sch72, Sch77}. Gansner \cite{Ga80} proved that promotion can also be described using using Bender-Knuth involutions \cite{BK}. 
In this paper, we only need the promotion operator on $\SSYT(k, [n])$. 

The $i$th ($i \in [n]$) Bender-Knuth involution ${\rm BK}_i: \SSYT(k, [n]) \to \SSYT(k, [n])$, is defined by the following procedure: for $i$ and $i+1$ which are not in the same column, we replace $i$ by $i+1$ and replace $i+1$ by $i$, and then reorder $i$, $i+1$ in each row such that the resulting tableau is semistandard. The promotion ${\rm pr}(T)$ of $T$ is defined by 
\begin{align*}
{\rm pr}(T) = {\rm BK}_1 \circ \cdots \circ {\rm BK}_{n-1}(T),    
\end{align*}
see also Definition A.3 in \cite{Hop20}.  

\subsection{\texorpdfstring{2-column prime tableaux for $\CC[\Gr(4,8)]$}{2-column prime tableaux for CC[Gr(4,8)]}}

There are totally $122$ prime tableaux for $\CC[\Gr(4,8)]$. Two of them are non-real:
\begin{align*}
\scalemath{0.7}{ \begin{ytableau}
 1&2\\3&4\\5&6\\7&8
\end{ytableau}, \quad \begin{ytableau}
1&3\\2&5\\4&7\\6&8
\end{ytableau}. }
\end{align*}
They corresponds to the following prime non-real modules respectively:
\begin{align*}
L(Y_{3, 1}Y_{2, -2}Y_{3, -1}Y_{1, -5}Y_{2, -4}Y_{1, -7}), \ L(Y_{2, 0}Y_{1, -3}Y_{3, -3}Y_{2, -6}).
\end{align*}

Up to promotion, there are $15$ real prime tableaux:
\begin{align*}
\scalemath{0.66}{ 
\begin{ytableau}
1 & 1 \\
2 & 4 \\
3 & 6 \\
5 & 7
\end{ytableau},  \ \begin{ytableau}
1 & 1 \\
2 & 4 \\
3 & 6 \\
5 & 8
\end{ytableau},  \ \begin{ytableau}
1 & 1 \\
2 & 4 \\
3 & 7 \\
5 & 8
\end{ytableau},  \ \begin{ytableau}
1 & 2 \\
2 & 4 \\
3 & 6 \\
5 & 7
\end{ytableau},  \ \begin{ytableau}
1 & 2 \\
2 & 4 \\
3 & 6 \\
5 & 8
\end{ytableau},  \ \begin{ytableau}
1 & 2 \\
2 & 4 \\
3 & 7 \\
5 & 8
\end{ytableau},  \ \begin{ytableau}
1 & 3 \\
2 & 4 \\
3 & 6 \\
5 & 8
\end{ytableau},  \ \begin{ytableau}
1 & 3 \\
2 & 4 \\
3 & 7 \\
5 & 8
\end{ytableau},  \ \begin{ytableau}
1 & 4 \\
2 & 6 \\
3 & 7 \\
5 & 8
\end{ytableau},  \ \begin{ytableau}
1 & 2 \\
2 & 4 \\
3 & 7 \\
6 & 8
\end{ytableau},  \ \begin{ytableau}
1 & 2 \\
2 & 5 \\
3 & 7 \\
6 & 8
\end{ytableau},  \ \begin{ytableau}
1 & 3 \\
2 & 4 \\
3 & 7 \\
6 & 8
\end{ytableau},  \ \begin{ytableau}
1 & 3 \\
2 & 5 \\
3 & 7 \\
6 & 8
\end{ytableau},  \ \begin{ytableau}
1 & 1 \\
2 & 3 \\
4 & 5 \\
6 & 7
\end{ytableau},  \ \begin{ytableau}
1 & 2 \\
2 & 5 \\
4 & 7 \\
6 & 8
\end{ytableau}. }
\end{align*}
They corresponds to the following prime real modules respectively:
\begin{align*}
& L(Y_{1, -1}Y_{3, 1}Y_{3, -1}Y_{2, -4}), \
L( Y_{1, -1}Y_{3, 1}Y_{3, -1}Y_{2, -4}Y_{1, -7}), \
L( Y_{1, -1}Y_{3, 1}Y_{3, -1}Y_{2, -4}Y_{2, -6}), \\
& L( Y_{1, -1}Y_{3, -1}Y_{2, -4}), \ L( Y_{1, -1}Y_{3, -1}Y_{2, -4}Y_{1, -7}), \
L( Y_{1, -1}Y_{3, -1}Y_{2, -4}Y_{2, -6}), \ 
L( Y_{1, -1}Y_{2, -4}Y_{1, -7}), 
\end{align*}
\begin{align*}
& 
L( Y_{1, -1}Y_{2, -4}Y_{2, -6}), \ 
L( Y_{1, -1}Y_{3, -5}), \
L( Y_{1, -1}Y_{1, -3}Y_{3, -1}Y_{2, -4}Y_{2, -6}), \\
& 
L( Y_{1, -1}Y_{1, -3}Y_{3, -1}Y_{3, -3}Y_{2, -6}), \
L( Y_{1, -1}Y_{1, -3}Y_{2, -4}Y_{2, -6}), \
L( Y_{1, -1}Y_{1, -3}Y_{3, -3}Y_{2, -6}), \\
&
L( Y_{2, 0}Y_{3, 1}Y_{1, -3}Y_{2, -2}Y_{1, -5}), \
L( Y_{2, 0}Y_{1, -3}Y_{3, -1}Y_{3, -3}Y_{2, -6}).
\end{align*}

\subsection{\texorpdfstring{2-column prime tableaux for $\CC[\Gr(5,10)]$}{2-column prime tableaux for CC[Gr(5,10)]}}

There are totally $197$ prime non-real tableaux for $\CC[\Gr(5,10)]$. Up to promotion, there are $21$ prime non-real tableaux:
\begin{align*}
& \scalemath{0.7}{ \begin{ytableau}
1 & 1 \\
2 & 3 \\
4 & 5 \\
6 & 7 \\
8 & 9
\end{ytableau},  \ \begin{ytableau}
1 & 1 \\
2 & 3 \\
4 & 5 \\
6 & 7 \\
8 & 10
\end{ytableau},  \ \begin{ytableau}
1 & 1 \\
2 & 3 \\
4 & 5 \\
6 & 7 \\
9 & 10
\end{ytableau},  \ \begin{ytableau}
1 & 1 \\
2 & 3 \\
4 & 5 \\
6 & 8 \\
9 & 10
\end{ytableau},  \ \begin{ytableau}
1 & 1 \\
2 & 3 \\
4 & 5 \\
7 & 8 \\
9 & 10
\end{ytableau},  \ \begin{ytableau}
1 & 1 \\
2 & 3 \\
4 & 6 \\
7 & 8 \\
9 & 10
\end{ytableau},  \ \begin{ytableau}
1 & 1 \\
2 & 3 \\
5 & 6 \\
7 & 8 \\
9 & 10
\end{ytableau},  \ \begin{ytableau}
1 & 1 \\
2 & 4 \\
3 & 6 \\
5 & 8 \\
7 & 9
\end{ytableau},  \ \begin{ytableau}
1 & 1 \\
2 & 4 \\
3 & 6 \\
5 & 8 \\
7 & 10
\end{ytableau},  \ \begin{ytableau}
1 & 1 \\
2 & 4 \\
3 & 6 \\
5 & 9 \\
7 & 10
\end{ytableau}, }  \\
& \scalemath{0.7}{ \begin{ytableau}
1 & 1 \\
2 & 4 \\
3 & 6 \\
5 & 9 \\
8 & 10
\end{ytableau},  \ \begin{ytableau}
1 & 1 \\
2 & 4 \\
3 & 7 \\
5 & 9 \\
8 & 10
\end{ytableau},  \ \begin{ytableau}
1 & 1 \\
2 & 4 \\
3 & 7 \\
6 & 9 \\
8 & 10
\end{ytableau},  \ \begin{ytableau}
1 & 1 \\
2 & 4 \\
5 & 6 \\
7 & 8 \\
9 & 10
\end{ytableau},  \ \begin{ytableau}
1 & 1 \\
2 & 5 \\
3 & 7 \\
6 & 9 \\
8 & 10
\end{ytableau},  \ \begin{ytableau}
1 & 1 \\
2 & 5 \\
4 & 7 \\
6 & 9 \\
8 & 10
\end{ytableau},  \ \begin{ytableau}
1 & 1 \\
3 & 4 \\
5 & 6 \\
7 & 8 \\
9 & 10
\end{ytableau},  \ \begin{ytableau}
1 & 1 \\
3 & 5 \\
4 & 7 \\
6 & 9 \\
8 & 10
\end{ytableau},  \ \begin{ytableau}
1 & 2 \\
3 & 4 \\
5 & 6 \\
7 & 8 \\
9 & 10
\end{ytableau},  \ \begin{ytableau}
1 & 2 \\
3 & 4 \\
5 & 6 \\
7 & 9 \\
8 & 10
\end{ytableau},  \ \begin{ytableau}
1 & 2 \\
3 & 4 \\
5 & 7 \\
6 & 9 \\
8 & 10
\end{ytableau}. }
\end{align*}
They corresponds to the following prime non-real modules respectively:
\begin{align*}
& L(Y_{3, 1}Y_{4, 2}Y_{2, -2}Y_{3, -1}Y_{1, -5}Y_{2, -4}Y_{1, -7}), \ 
L(Y_{3, 1}Y_{4, 2}Y_{2, -2}Y_{3, -1}Y_{1, -5}Y_{2, -4}Y_{1, -7}Y_{1, -9}), \\
&
L(Y_{3, 1}Y_{4, 2}Y_{2, -2}Y_{3, -1}Y_{1, -5}Y_{2, -4}Y_{1, -7}Y_{1, -7}Y_{1, -9}), \
L(Y_{3, 1}Y_{4, 2}Y_{2, -2}Y_{3, -1}Y_{1, -5}Y_{2, -4}Y_{1, -7}Y_{2, -6}Y_{1, -9}), 
\end{align*}
\begin{align*}
& 
L(Y_{3, 1}Y_{4, 2}Y_{2, -2}Y_{3, -1}Y_{2, -4}Y_{2, -4}Y_{1, -7}Y_{2, -6}Y_{1, -9}), \
L(Y_{3, 1}Y_{4, 2}Y_{2, -2}Y_{3, -1}Y_{2, -4}Y_{3, -3}Y_{1, -7}Y_{2, -6}Y_{1, -9}), \\
& 
L(Y_{3, 1}Y_{4, 2}Y_{3, -1}Y_{3, -1}Y_{2, -4}Y_{3, -3}Y_{1, -7}Y_{2, -6}Y_{1, -9}), \
L(Y_{2, 0}Y_{4, 2}Y_{1, -3}Y_{4, 0}Y_{3, -3}Y_{2, -6}), 
\end{align*}
\begin{align*}
& 
L(Y_{2, 0}Y_{4, 2}Y_{1, -3}Y_{4, 0}Y_{3, -3}Y_{2, -6}Y_{1, -9}), \
L(Y_{2, 0}Y_{4, 2}Y_{1, -3}Y_{4, 0}Y_{3, -3}Y_{2, -6}Y_{2, -8}), \\ 
& 
L(Y_{2, 0}Y_{4, 2}Y_{1, -3}Y_{4, 0}Y_{1, -5}Y_{3, -3}Y_{2, -6}Y_{2, -8}), \
L(Y_{2, 0}Y_{4, 2}Y_{1, -3}Y_{4, 0}Y_{1, -5}Y_{3, -3}Y_{3, -5}Y_{2, -8}), 
\end{align*}
\begin{align*}
& 
L(Y_{2, 0}Y_{4, 2}Y_{2, -2}Y_{4, 0}Y_{1, -5}Y_{3, -3}Y_{3, -5}Y_{2, -8}), \
L(Y_{3, 1}Y_{4, 2}Y_{3, -1}Y_{4, 0}Y_{2, -4}Y_{3, -3}Y_{1, -7}Y_{2, -6}Y_{1, -9}), \\ 
& 
L(Y_{2, 0}Y_{4, 2}Y_{2, -2}Y_{4, 0}Y_{1, -5}Y_{4, -2}Y_{3, -5}Y_{2, -8}), \
L(Y_{3, 1}Y_{4, 2}Y_{2, -2}Y_{4, 0}Y_{1, -5}Y_{4, -2}Y_{3, -5}Y_{2, -8}), 
\end{align*}
\begin{align*}
& 
L(Y_{4, 2}^2 Y_{3, -1}Y_{4, 0}Y_{2, -4}Y_{3, -3}Y_{1, -7}Y_{2, -6}Y_{1, -9}), \
L(Y_{4, 2}^2 Y_{2, -2}Y_{4, 0}Y_{1, -5}Y_{4, -2}Y_{3, -5}Y_{2, -8}), \\ 
&
L(Y_{4, 2}Y_{3, -1}Y_{4, 0}Y_{2, -4}Y_{3, -3}Y_{1, -7}Y_{2, -6}Y_{1, -9}), \
L(Y_{4, 2}Y_{3, -1}Y_{4, 0}Y_{2, -4}Y_{3, -3}Y_{2, -6}Y_{2, -8}), \\
&
L(Y_{4, 2}Y_{3, -1}Y_{4, 0}Y_{1, -5}Y_{3, -3}Y_{3, -5}Y_{2, -8}).
\end{align*}

There are totally $3260$ prime real tableaux for $\CC[\Gr(5,10)]$. Up to promotion, there are $326$ prime real tableaux. Ten of these tableaux are:
\begin{align*}
\scalemath{0.7}{
\begin{ytableau}
1 & 1 \\
2 & 2 \\
3 & 5 \\
4 & 7 \\
6 & 8
\end{ytableau}, \ \begin{ytableau}
1 & 1 \\
2 & 2 \\
3 & 5 \\
4 & 7 \\
6 & 9
\end{ytableau}, \ \begin{ytableau}
1 & 1 \\
2 & 2 \\
3 & 5 \\
4 & 7 \\
6 & 10
\end{ytableau}, \ \begin{ytableau}
1 & 1 \\
2 & 2 \\
3 & 5 \\
4 & 8 \\
6 & 9
\end{ytableau}, \ \begin{ytableau}
1 & 1 \\
2 & 2 \\
3 & 5 \\
4 & 8 \\
6 & 10
\end{ytableau}, \ \begin{ytableau}
1 & 1 \\
2 & 2 \\
3 & 5 \\
4 & 9 \\
6 & 10
\end{ytableau}, \ \begin{ytableau}
1 & 1 \\
2 & 3 \\
3 & 5 \\
4 & 7 \\
6 & 8
\end{ytableau}, \ \begin{ytableau}
1 & 1 \\
2 & 3 \\
3 & 5 \\
4 & 7 \\
6 & 9
\end{ytableau}, \ \begin{ytableau}
1 & 1 \\
2 & 3 \\
3 & 5 \\
4 & 7 \\
6 & 10
\end{ytableau}, \ \begin{ytableau}
1 & 1 \\
2 & 3 \\
3 & 5 \\
4 & 8 \\
6 & 9
\end{ytableau}.
}
\end{align*}
They correspond to the following real prime modules respectively:
\begin{align*}
& L(Y_{1, -1}Y_{3, 1}Y_{3, -1}Y_{2, -4}), \ L( Y_{1, -1}Y_{3, 1}Y_{3, -1}Y_{2, -4}Y_{1, -7}), \ L( Y_{1, -1}Y_{3, 1}Y_{3, -1}Y_{2, -4}Y_{1, -7}Y_{1, -9}), \\ 
& L( Y_{1, -1}Y_{3, 1}Y_{3, -1}Y_{2, -4}Y_{2, -6}), \ L( Y_{1, -1}Y_{3, 1}Y_{3, -1}Y_{2, -4}Y_{2, -6}Y_{1, -9}), 
\end{align*}
\begin{align*}
& L( Y_{1, -1}Y_{3, 1}Y_{3, -1}Y_{2, -4}Y_{2, -6}Y_{2, -8}), \
L( Y_{1, -1}Y_{4, 2}Y_{3, -1}Y_{2, -4}), \ L( Y_{1, -1}Y_{4, 2}Y_{3, -1}Y_{2, -4}Y_{1, -7}), \\
& L( Y_{1, -1}Y_{4, 2}Y_{3, -1}Y_{2, -4}Y_{1, -7}Y_{1, -9}), \ L( Y_{1, -1}Y_{4, 2}Y_{3, -1}Y_{2, -4}Y_{2, -6}).
\end{align*}

\section{Prime modules corresponding to tableaux with two or more columns} \label{sec:Prime modules corresponding to tableaux with two or more columns}

In this section, we give a necessary condition for a tableau to be prime. We also give a conjecture that every pairwise noncrossing but not weakly separated collection of 1-column 
semistandard tableaux give a prime tableau.
 
\begin{conjecture} \label{conj:pairwise noncrossing and not weakly separated collections induce prime modules}
Let $T \in {\rm SSYT}(k, [n])$ and let $(S_1, \ldots, S_m)$ be the unique unordered $m$-tuple $(S_1, \ldots, S_m)$ of one-column tableaux which are pairwise noncrossing such that $T = S_1 \cup \cdots \cup S_m$. If for every $i \ne j$, $S_i, S_j$ are not weakly separated, then $T$ is prime.
\end{conjecture}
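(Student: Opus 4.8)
The plan is to reformulate primeness of $T$ inside the dual canonical basis and then try to bootstrap from the two-column case, Theorem \ref{thm: Uqslkhat modules corresponding to 2 column tableaux}, by induction on the number $m$ of noncrossing columns. First I would record the translation already used in the proof of Theorem \ref{thm: Uqslkhat modules corresponding to 2 column tableaux}: since each one-column tableau $S_i$ is a single $k$-subset of $[n]$, hence a Pl\"ucker coordinate, the module $L(M_{S_i})\otimes L(M_{S_j})$ is simple if and only if $S_i,S_j$ are weakly separated. More generally, $T$ fails to be prime precisely when there is a nontrivial splitting $T=T'\cup T''$ with $L(M_{T'})\otimes L(M_{T''})$ simple, equivalently with $\ch(T)=\ch(T')\ch(T'')$ in the dual canonical basis of $\CC[\Gr(k,n,\sim)]$. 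Thus the goal is to show that the hypotheses on $(S_1,\dots,S_m)$ forbid every such splitting; the cases $m\le 2$ are exactly Theorem \ref{thm: Uqslkhat modules corresponding to 2 column tableaux}.

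Next, assuming toward a contradiction that $T=T'\cup T''$ is a nontrivial splitting with $L(M_{T'})\otimes L(M_{T''})$ simple, I would decompose $T'=A_1\cup\cdots\cup A_p$ and $T''=B_1\cup\cdots\cup B_q$ into their noncrossing columns, with $p+q=m$. The core of the argument should be a \emph{hereditary simplicity} step: simplicity of $L(M_{T'})\otimes L(M_{T''})$ ought to force every cross pair of column Pl\"ucker coordinates $A_i$ (from $T'$) and $B_j$ (from $T''$) to be weakly separated, i.e. $L(M_{A_i})\otimes L(M_{B_j})$ simple. Granting this, the collection $\{A_1,\dots,A_p,B_1,\dots,B_q\}$ has union $T$ and is cross weakly separated. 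I would then invoke uniqueness of the noncrossing decomposition, Lemma \ref{lem:semistandard tableau and noncrossing tuple}, together with the $k=2$ fact that weak separation implies noncrossing, in the spirit of Lemma \ref{lem:T1T2 non crossing and not weakly separated implies that if S1 S2 is T1 T2 then S1 S2 not weakly separated}: uncrossing the mixed collection back to the unique pairwise-noncrossing decomposition $(S_1,\dots,S_m)$ should preserve non-weak-separation of the relevant pairs, producing two columns $S_i,S_j$ coming from opposite sides of the split that are weakly separated. This contradicts the hypothesis that all pairs $S_i,S_j$ are not weakly separated.

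The step I expect to be the main obstacle, and the reason the statement is only conjectured, is precisely the hereditary simplicity claim: passing from simplicity of the full product $L(M_{T'})\otimes L(M_{T''})$ to simplicity of the column sub-products $L(M_{A_i})\otimes L(M_{B_j})$. Unlike the two-column case, where $T'$ and $T''$ are single columns and no re-sorting occurs, for $m\ge 3$ the tableaux $T',T''$ are genuine multi-column (possibly prime) modules that need not decompose as the tensor product of their own columns, so the columns $A_i$ are not available as honest tensor factors of $L(M_{T'})$. Making this rigorous seems to require either a general sub-product simplicity property for real simple modules in $\mathcal{C}_\ell$, or finer input from the theory of normalized $R$-matrices and the associated invariant $\mathfrak{d}$, or a direct analysis of the product of dual canonical basis elements via \eqref{eq:formula of ch(T)}.

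As an alternative to the module-theoretic route, one could attempt a direct computation in the dual canonical basis: expand $\ch(T')\ch(T'')$ using \eqref{eq:formula of ch(T)} and show that the coefficient of some monomial strictly below $\ch(T'\cup T'')$ is nonzero whenever $T'$ and $T''$ are nontrivial and the pairwise-non-weak-separation hypothesis holds, so that $\ch(T')\ch(T'')\ne \ch(T)$ and the tensor product is not simple. This mirrors the mechanism behind Lemma \ref{lem:T1T2 non crossing and not weakly separated implies that if S1 S2 is T1 T2 then S1 S2 not weakly separated}. The difficulty here is to control the Kazhdan--Lusztig corrections appearing in \eqref{eq:formula of ch(T)} and to rule out cancellations among the lower terms for $m\ge 3$; isolating a single surviving lower term, for instance the one produced by uncrossing one pair of columns drawn from opposite sides of the split, is, I expect, the crux.
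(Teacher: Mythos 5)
The statement you are trying to prove is left as a conjecture in the paper: the authors prove only the case $m=2$ (Theorem~\ref{thm: Uqslkhat modules corresponding to 2 column tableaux}) and explicitly offer no argument for $m\ge 3$, so there is no proof of record to compare yours against. Your proposal is likewise not a proof, and you say so yourself; the question is whether the gap you isolate is the genuine one, and it is. Your initial reductions are sound: non-primeness of $T$ is equivalent to the existence of a nontrivial splitting $T=T'\cup T''$ with $\ch(T)=\ch(T')\ch(T'')$, and if one could show that such a splitting forces every cross pair $(A_i,B_j)$ of noncrossing columns of $T'$ and $T''$ to be weakly separated, the conclusion would follow even more directly than you suggest: since weakly separated pairs are noncrossing (this is how Definition~\ref{def:noncrossing pairs} is built, each window being either weakly separated or having distinct interiors), the mixed collection $\{A_1,\dots,A_p,B_1,\dots,B_q\}$ would be pairwise noncrossing, hence equal to $(S_1,\dots,S_m)$ by the uniqueness in Lemma~\ref{lem:semistandard tableau and noncrossing tuple}, and any cross pair would be a weakly separated pair $S_i,S_j$, contradicting the hypothesis. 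No delicate ``uncrossing'' step is needed at that point.

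The step that fails is exactly the hereditary simplicity claim, and it fails for the structural reason you name: when $T'$ is itself prime (which is precisely the interesting case, e.g.\ when $p\ge 2$ and the $A_i$ are pairwise non--weakly-separated), $L(M_{T'})$ is \emph{not} isomorphic to $L(M_{A_1})\otimes\cdots\otimes L(M_{A_p})$, so the modules $L(M_{A_i})\otimes L(M_{B_j})$ do not occur as tensor factors or natural subquotients of $L(M_{T'})\otimes L(M_{T''})$, and simplicity of the latter gives no a priori control over the former. The known criteria (simplicity of a tensor product of simples being detected on pairs of the given factors, quasi-commutation of quantum Pl\"ucker coordinates via \cite{LZ98,Sco05}) all operate on the factors one actually has, not on finer column decompositions of those factors. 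Your alternative route through \eqref{eq:formula of ch(T)} runs into the same wall in a different guise: one must exhibit a lower term of $\ch(T')\ch(T'')$ surviving all Kazhdan--Lusztig cancellations, which is an open combinatorial problem for $m\ge 3$. So your assessment is accurate: the missing ingredient is a genuine one, it is the reason the statement is stated as Conjecture~\ref{conj:pairwise noncrossing and not weakly separated collections induce prime modules} rather than a theorem, and nothing in the paper fills it.
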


Conjecture \ref{conj:pairwise noncrossing and not weakly separated collections induce prime modules} gives an explicit description of the highest $l$-weights of a very large family of prime $U_q(\widehat{\mathfrak{sl}_k})$-modules.

Note that the condition in Conjecture \ref{conj:pairwise noncrossing and not weakly separated collections induce prime modules} is a sufficient condition but not a necessary condition. For example, in the case of $\Gr(3,8)$, the eight tableaux in (\ref{eq:the eight talbeaux which is not in N1}) are prime, see \cite{CDHHHL, RSV21}. But they do not satisfy the condition in Conjecture \ref{conj:pairwise noncrossing and not weakly separated collections induce prime modules}. For example, the unique $3$-tuple of pairwise non-crossing tableaux corresponding to the first tableau is
\begin{align*}
\scalemath{0.7}{ (\begin{ytableau}
 1\\6\\7
\end{ytableau}, \quad \begin{ytableau}
 2\\5\\7
\end{ytableau}, \quad \begin{ytableau}
 3\\4\\8
\end{ytableau}).  }
\end{align*}
The first two 1-column tableaux are weakly separated. 

\begin{align} \label{eq:the eight talbeaux which is not in N1}
& \scalemath{0.7}{ \begin{ytableau}
 1 & 2 & 3 \\
 6 & 5 & 4 \\
 7 & 7 & 8 \\
\end{ytableau}, \
\begin{ytableau}
 1 & 2 & 5 \\
 4 & 3 & 7 \\
 6 & 6 & 8 \\
\end{ytableau}, \
\begin{ytableau}
 1 & 3 & 4 \\
 2 & 6 & 5 \\
 5 & 7 & 8 \\
\end{ytableau}, \
\begin{ytableau}
 1 & 2 & 3 \\
 5 & 4 & 4 \\
 6 & 8 & 7 \\
\end{ytableau}, \ 
\begin{ytableau}
 1 & 2 & 4 \\
 3 & 3 & 7 \\
 6 & 5 & 8 \\
\end{ytableau}, \
\begin{ytableau}
 1 & 2 & 3 \\
 2 & 6 & 5 \\
 4 & 7 & 8 \\
\end{ytableau}, \
\begin{ytableau}
 1 & 1 & 2 \\
 4 & 5 & 3 \\
 7 & 6 & 8 \\
\end{ytableau}, \
\begin{ytableau}
 1 & 3 & 4 \\
 2 & 7 & 6 \\
 5 & 8 & 8 \\
\end{ytableau}. }
\end{align}

In the case of $r=2$, Conjecture \ref{conj:pairwise noncrossing and not weakly separated collections induce prime modules} is proved in Section \ref{sec:prime modules corresponding to 2-column tableaux}. 

\begin{example}
In the case of $\Gr(3,9)$, there are $3$ pairwise noncrossing and not weakly separated $3$-tuples:
\begin{align*}
\scalemath{0.7}{  ( \begin{ytableau}
1 \\ 6 \\ 7
\end{ytableau}, \begin{ytableau}
    2 \\ 5 \\ 8
\end{ytableau}, \begin{ytableau}
3 \\ 4 \\ 9
\end{ytableau} ), \ ( \begin{ytableau}
1 \\ 4 \\ 6
\end{ytableau}, \begin{ytableau}
    2 \\ 3 \\ 7
\end{ytableau}, \begin{ytableau}
5 \\ 8 \\ 9
\end{ytableau} ), \ ( \begin{ytableau}
1 \\ 2 \\ 5
\end{ytableau}, \begin{ytableau}
    3 \\ 7 \\ 8
\end{ytableau}, \begin{ytableau}
4 \\ 6 \\ 9
\end{ytableau} ).  }
\end{align*}

They correspond to $3$ prime tableaux in $\SSYT(3, [9])$:
\begin{align*}
\scalemath{0.7}{ \begin{ytableau}
    1 & 2 & 3 \\
 4 & 5 & 6 \\
 7 & 8 & 9
\end{ytableau}, \
 \begin{ytableau}
      1 & 2 & 5 \\
 3 & 4 & 8 \\
 6 & 7 & 9
 \end{ytableau}, \ 
\begin{ytableau}
    1 & 3 & 4 \\
 2 & 6 & 7 \\
 5 & 8 & 9
\end{ytableau}. }
\end{align*}

The corresponding prime non-real modules are
\begin{align*}
& L(Y_{2, 0}Y_{2, -2}^2 Y_{1, -5}Y_{2, -4}^2 Y_{1, -7}^2 Y_{2, -6}Y_{1, -9}^2 Y_{1, -11}), \\
& L(Y_{2, 0}Y_{1, -3}Y_{2, -2}Y_{1, -5}^2 Y_{1, -7}Y_{2, -8}Y_{2, -10}), \\
& L(Y_{1, -1}Y_{1, -3}Y_{2, -4}Y_{2, -6}^2 Y_{1, -9}Y_{2, -8}Y_{1, -11}).
\end{align*}

\end{example}

\end{document}